\theoremstyle{theorem}
\newtheorem{theorem}{Theorem}
\newtheorem{prop}[theorem]{Proposition}
\newtheorem{lemma}[theorem]{Lemma}
\newtheorem{cor}[theorem]{Corollary}
\newcommand{\Rb}{\mathbb{R}}
\newcommand{\Nb}{\mathbb{N}}
\newcommand{\Zb}{\mathbb{Z}}
\newcommand{\Fb}{\mathbb{F}}
\newcommand{\PP}{\mathcal{P}}
\newcommand{\LL}{\mathcal{L}}
\author{David Nash and Jonathan Needleman}
\title{When are finite projective planes magic?}
\begin{document}



\maketitle
\begin{abstract}
This article studies a generalization of magic squares to finite projective planes.  In traditional magic squares the entries come from the natural numbers.  This does not work for finite projective planes, so we instead use Abelian groups.  For each finite projective plane we demonstrate a small group over which the plane can labeled magically.  In the prime order case we classify all groups over which the projective plane can be made magic.
\end{abstract}

Magic squares have a long history,  with surviving written examples dating back to at least $300$ BC.  In fact, according to Chinese legends, a $3\times 3$ magic square that is today known as the ``Lo Shu'' square was observed as a pattern on the shell of a tortoise by Emperor Yu sometime between 2200 and 2100 BC!  Since those ancient times, magic squares have been marveled at and studied in numerous cultures the around the world.  The idea is to fill in numbers into a square so that the sum along each row, column and diagonal are all equal to the same number -- often called the magic constant.  For instance the square in figure \ref{fig:square} is a representation of the ``Lo Shu'' magic square from the legends.  Here the magic constant is 15.

\begin{figure}[!htbp]
\captionsetup[subfigure]{labelformat=simple}
\centering
\begin{tikzpicture}
    \draw[thick] (0,0) -- (2,0);
    \draw[thick] (0,.667) -- (2,.667);
    \draw[thick] (0,1.333) -- (2,1.333);
    \draw[thick] (2,2) -- (0,2);
    \draw[thick] (0,0) -- (0,2);
    \draw[thick] (.667,0) -- (.667,2);
    \draw[thick] (1.333,0) -- (1.333,2);
    \draw[thick] (2,0) -- (2,2);
    \draw (.333,.333) node {$8$};
    \draw (1,.333) node {$1$};
    \draw (1.667,.333) node {$6$};
    \draw (.333,1) node {$3$};
    \draw (1,1) node {$5$};
    \draw (1.667,1) node {$7$};
    \draw (.333,1.667) node {$4$};
    \draw (1,1.667) node {$9$};
    \draw (1.667,1.667) node {$2$};
\end{tikzpicture}
\caption{The ``Lo Shu'' magic square.}
\label{fig:square}
\end{figure}

There have been numerous generalizations of magic squares to other shapes.  Ely  introduces the idea of magic designs \cite{Ely}.  A design is just a set of ``points'' and a set of ``lines,'' with each line being a  subset of points.    A magic design is then an injective function from the points to the natural numbers where the sum along any line is constant.  While Ely  focused primarily on designs based on triangles and hexagons other designs have since been studied.  A particulary nice family of designs comes from (combinatorial) configurations.  These are designs where every line has the same number of points, and every point has the same number of lines through it.   Magic stars are an example with two lines through every point \cite{Trenkler:2} and more recently Raney studied magic configurations where three lines pass through every point, and each line contains three points \cite{Raney}.

Projective planes are particularly nice configurations because the number of points on a line is the same as the number of lines through a point.  Unfortunately, we will show that finite projective planes are never magic for any subset of the integers.  Thankfully, there is no reason to limit ourselves to integers.  All that is really needed to discuss ``magicness'' is the ability to add the entries along a line.  Because of this we can try to make projective planes magic over Abelian groups.

For the special case of the Fano Plane (the finite projective plane of order 2), Miesner and the first author \cite{MN} show that no magic labelings exist with labels in $\Zb/n\Zb$ for any $n$.  In this paper we further generalize and study the ``magicness'' of all finite projective planes.  Specifially, for every finite projective plane we will find a group for which it is magic and for a certain class of projective planes we will classify all groups for which it can be made magic.

For the interested reader, the authors further generalize the notion of magicness over Abelian groups to higher-dimensional finite projective spaces in \cite{NN}.

\section{Projective planes}\label{proj_plane}
We summarize some standard results about finite projective planes.  For a general resource on finite projective planes the authors suggest \cite{AS} as an introduction and \cite{Dembowski} for more advanced readers.

A projective plane $\Pi=(\PP,\LL)$ is made up of a set of points $\PP$ and lines $\LL$.  For any point $x \in \PP$ we let $\LL^x$ denote the set of all lines through $x$.  To be a projective plane the following three axioms must hold.
\begin{enumerate}
    \item There is a line between every pair of points. -- If $x, x'\in \PP$ with $x\neq x'$ then there is a unique $L\in \LL$ with $x,x'\in L$.
    \item Each pair of lines intersects at a unique point. -- If $L, L'\in \LL$ with $L\neq L'$ then there is a unique $x\in \PP$ with $x\in L\cap L'$.
    \item The plane contains a quadrilateral.  -- There exist $x_1, x_2, x_3, x_4\in \PP$ so that there is no $L\in \LL$ that contains three of the points.
\end{enumerate}
A finite projective plane $\Pi=(\PP,\LL)$ is just a plane where the number of points, $|\PP|$, is finite.  For finite projective planes the axioms imply some basic facts.

\textsc{Fact.} {\em For a finite projective plane $\Pi=(\PP,\LL)$ there exists a number $n \in \Nb$, called the \emph{order} of $\Pi$, and the following hold:
    \begin{enumerate}
        \item Every line contains $n+1$ points.  -- $|L|=n+1$ for all $L\in \LL$.
        \item Every point is on $n+1$ lines.  -- $|\LL^x|=n+1$ for every $x\in \PP$.
        \item There are an equal number of points and lines.  -- $|\PP|=|\LL|=n^2+n+1$.
    \end{enumerate}}

It is an open question to classify for which orders $n$ there exists a projective plane, however, it is known that for any prime $p$ and any $k \in \Nb$, there exists a projective plane of order $n=p^k$.  In fact, these are the only orders for which projective planes are known to exist.  However, finite projective planes have been completely classified for small orders and for sufficiently small order ($\leq 8$) they are all constructible in a uniform way.  We will make use of that construction to deal with small order cases and thus we give it here.

Let $\Fb_q$ be a finite field of order $q=p^k$ for some prime $p$.  We construct a projective plane $\Pi_q=(\PP_q,\LL_q)$ in the following way. View $\Fb_q^3$ as a vector space over $\Fb_q$, then let $\PP_q=\{\text{1-dim subspaces of } \Fb_q^3\}$ and $\LL_q=\{\text{2-dim subspaces of } \Fb_q^3\}$.   One can verify that this construction yields a finite projective plane of order $q$.  Since a point in $\Pi_q$ is a line through the origin of $\Fb_q^3$ we can describe the points as follows.  Given $\langle x_1, x_2, x_3\rangle\in\Fb_q^3$ a non-zero vector, the set $[x_1, x_2, x_3]=\{\langle cx_1, cx_2, cx_3\rangle \mid c\in \Fb_q^*\}$ describes all points in $\PP_q$.  The lines in $\LL_q$ are the planes through the origin in $\Fb_q^3$.  Any vector $v$ determines a plane through the origin by considering all vectors orthogonal to $v$.  So the lines $\LL_q$ are described as $[[x_1, x_2, x_3]]=\{u \in \Fb_q^3 \mid u \cdot \langle x_1, x_2, x_3\rangle = 0\}$, where  $\langle x_1, x_2, x_3\rangle\in\Fb_q^3$ is a non-zero vector.  Notice, for $c\in \Fb_q\backslash\{0\}$, $[[cx_1, cx_2, cx_3]]=[[x_1,x_2,x_3]]$.

The smallest possible case when $q=2$ is a plane with 7 points and 7 lines that is usually called the Fano plane, see figure~\ref{fig:fano}.  There are 3 points on each line and 3 lines through each point.  The Fano plane is actually the unique finite projective plane of order 2.

\begin{figure}[ht!]
\captionsetup[subfigure]{labelformat=simple}
\centering
\begin{tikzpicture}
    \draw[thick] (1,0) -- (5,0);
    \draw[thick] (5,0) -- (3,3.464);
    \draw[thick] (1,0) -- (3,3.464);
    \draw[thick] (3,0) -- (3,3.464);
    \draw[thick] (2,1.732) -- (5,0);
    \draw[thick] (1,0) -- (4,1.732);
    \draw[thick] (3,1.155) circle(1.155);
    \draw[thick,fill] (1,0) circle (0.1);
    \draw[thick,fill] (2,1.732) circle (0.1);
    \draw[thick,fill] (3,0) circle (0.1);
    \draw[thick,fill] (3,1.155) circle (0.1);
    \draw[thick,fill] (3,3.464) circle (0.1);
    \draw[thick,fill] (4,1.732) circle (0.1);
    \draw[thick,fill] (5,0) circle (0.1);
    \draw(-0.1,0) node {\small $x_1=(1,0,0)$};
    \draw (0.9,1.732) node {\small $x_6=(1,1,0)$};
    \draw (3,-0.5) circle node {\small $x_5=(1,0,1)$};
    \draw (3.7,1.155) node {\tiny $(1,1,1)$};
    \draw (2.6,1.155) node {\small $x_7$};
    \draw (3,3.964) node {\small $x_2=(0,1,0)$};
    \draw (5.1,1.732) node {\small $x_4=(0,1,1)$};
    \draw (6.1,0) node {\small $x_3=(0,0,1)$};
\end{tikzpicture}
\caption{The Fano plane, $\Pi_2$.}
\label{fig:fano}
\end{figure}

\section{Non-magicness}
Classically, an $n\times n$ square is magic if it is labeled with the numbers $\{1,\ldots, n^2\}$ so that each row, column, and diagonal sum to the same value.  We can ask a similar question for a finite projective plane $\Pi=(\PP,\LL)$ of order $n$.   We would like to assign the values $\{1,\ldots , n^2+n+1\}$ to the points $\PP$ so that the sum along any line is the same.  Unfortunately, this is impossible, not only for the numbers $\{1,\ldots, n^2+n+1\}$, but for any set of $n^2+n+1$ distinct real numbers.

To prove this, we need the \emph{incidence matrix} $A$ of the projective plane $\Pi$.  Let $x_1, \ldots , x_{n^2+n+1}$ be an enumeration of the points $\PP$ and $L_1, \ldots, L_{n^2+n+1}$ be an enumeration of the lines $\LL$.  The rows of $A$ will be indexed by the lines of $\Pi$ and the columns by the points.  Given $L_i\in \LL$ and $x_j\in\PP$, the entry $A_{i,j}$ is $1$ if the point $x_j$ is on the line $L_i$, and $0$ otherwise.  For example, working from Figure~\ref{fig:fano}, if we take $L_1=\overline{x_2x_3}$, $L_2=\overline{x_1x_3}$, $L_3=\overline{x_1x_2}$, $L_4=\overline{x_1x_4}$, $L_5=\overline{x_2x_5}$, $L_6=\overline{x_3x_6}$, and $L_7=\overline{x_4x_5}$, then the incidence matrix for Fano Plane is as follows:
$$
\begin{blockarray}{cccccccc}
&x_1&x_2&x_3&x_4&x_5&x_6&x_7\\
\begin{block}{c[ccccccc]}
L_1&0&1&1&1&0&0&0\\
L_2&1&0&1&0&1&0&0\\
L_3&1&1&0&0&0&1&0\\
L_4&1&0&0&1&0&0&1\\
L_5&0&1&0&0&1&0&1\\
L_6&0&0&1&0&0&1&1\\
L_7&0&0&0&1&1&1&0\\
\end{block}
\end{blockarray}
 $$

An important observation to make here is that the incidence matrix for any finite projective plane will be invertible over $\Rb$.  A beautiful way to prove this fact is to consider the matrix $AA^T$.  Observe, the $i,j$-entry of the matrix $AA^T$ is exactly the number of points on the intersection of the lines $L_i$ and $L_j$.  Since there are $n+1$ points on every line, we can see that the diagonal entries of $AA^T$ are all equal to $n+1$.  In addition, distinct lines always intersect at a single point and hence all of the other entries in $AA^T$ are equal to 1.  One can then use row reduction to show that $\det(AA^T) = (n+1)^2n^{n^2+n}$ (see e.g. \cite{Dembowski}).  Thus $AA^T$, and more importantly $A$ itself, is invertible.

The incidence matrix also gives us a natural way of translating from a labeling of the points in $\PP$ to the sums of those labels along each line in $\LL$. To assist in this translation we view labelings as functions on the set of points, $\PP$.  Let $f:\PP\to\Rb$ denote a function so that the sum along each line in $\LL$ is the same magic constant $c \in \Rb$.  If such an $f$ exists which is also injective then we say that $\Pi$ is \emph{magic over} $\Rb$.  The magic constant condition can then be represented by the matrix equation $A\mathbf{f} = \mathbf{c}$ where $A$ is the incidence matrix for $\Pi$, $\mathbf{f}$ is the column vector $[f(x)]_{x\in\PP}$, and $\mathbf{c}$ is the vector with all entries equal to $c$.  Using this data, we demonstrate that the plane $\Pi$ is not magic over $\Rb$.

\begin{prop}\label{RNotMagic} No finite projective plane $\Pi=(\PP,\LL)$ is magic over $\Rb$.
\end{prop}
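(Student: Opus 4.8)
The plan is to exploit the invertibility of the incidence matrix $A$ together with the fact that the all-ones vector is an eigenvector of the relevant operator. Suppose, for contradiction, that $\Pi$ is magic over $\Rb$ via an injective $f:\PP\to\Rb$ with magic constant $c$, so that $A\mathbf{f}=\mathbf{c}=c\mathbf{1}$, where $\mathbf{1}$ is the all-ones vector. First I would observe that the constant function also satisfies a version of this equation: since every line contains exactly $n+1$ points, $A\mathbf{1}=(n+1)\mathbf{1}$. Hence if we set $\mathbf{g}=\mathbf{f}-\tfrac{c}{n+1}\mathbf{1}$, then $A\mathbf{g}=A\mathbf{f}-\tfrac{c}{n+1}A\mathbf{1}=c\mathbf{1}-c\mathbf{1}=\mathbf{0}$. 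Since $A$ is invertible over $\Rb$ (as established in the discussion preceding the proposition, via $\det(AA^T)=(n+1)^2 n^{n^2+n}\neq 0$), we conclude $\mathbf{g}=\mathbf{0}$, i.e. $f$ is the constant function with value $\tfrac{c}{n+1}$.

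This immediately contradicts injectivity of $f$, because a finite projective plane has $n^2+n+1\geq 7$ points (since $n\geq 2$ by the quadrilateral axiom), so a constant function on $\PP$ cannot be injective. That completes the argument. The one point that needs a small remark is why $n\geq 2$: the quadrilateral axiom forces at least four points no three collinear, which rules out the degenerate cases $n=0,1$, so $|\PP|=n^2+n+1\geq 7>1$ and the constant labeling is genuinely non-injective.

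I do not anticipate a serious obstacle here; the only thing to be careful about is making sure the invertibility of $A$ is invoked correctly — specifically, that we are using $A$ invertible (equivalently $AA^T$ invertible) over $\Rb$, which is exactly what the determinant computation gives. An alternative phrasing that avoids introducing $\mathbf{g}$ would be to note directly that $A$ invertible means $\mathbf{f}=A^{-1}(c\mathbf{1})$ is uniquely determined, and since the constant vector $\tfrac{c}{n+1}\mathbf{1}$ is a solution of $A\mathbf{x}=c\mathbf{1}$, it must be the solution; but the subtraction trick is cleaner to write. Either way the heart of the matter is: invertibility pins down the labeling uniquely, the unique labeling is constant, and constant is not injective.
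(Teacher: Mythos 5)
Your proposal is correct and matches the paper's argument: both rest on the invertibility of the incidence matrix $A$ (via $\det(AA^T)=(n+1)^2n^{n^2+n}$) forcing the unique solution of $A\mathbf{f}=c\mathbf{1}$ to be the constant vector $\tfrac{c}{n+1}\mathbf{1}$, which cannot be injective. The subtraction trick with $\mathbf{g}$ is only a cosmetic variation on the paper's phrasing, which you yourself note as the alternative.
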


\begin{proof}
Let $f:\PP\to \Rb$ be any real valued function on the points $\PP$.  $\mathbf{f}$ is a column vector with the rows indexed by $\PP$.  If $A$ is the incidence matrix for $\Pi$ then $A\mathbf{f}$ will be a column vector with the rows indexed by $\LL$.  Hence we may think of $A\mathbf{f}$ as a real-valued function on $\LL$.  The value of the row indexed by $L\in\LL$ is exactly $\sum_{x\in L}f(x)$, the sum of $f$ along $L$, by construction of $A$.

We are interested in when $A\mathbf{f}$ is a constant function.  However, as mentioned above, the incidence matrix is invertible.  This means that the equation $A\mathbf{f}=c$ has a unique solution which, therefore, must be the constant function $f(x)=\frac{c}{n+1}$ for all $x \in \PP$.  Since $\mathbf{f}$ is not injective, it follows that $\Pi$ is not magic over $\Rb$.
\end{proof}

In light of the result above, we seek to find conditions under which a finite projective plane $\Pi=(\PP,\LL)$ can be considered to be magic.  We must be able to add the values assigned to points and the addition must be commutative since the points are not in any particular order.  So we let $G$ be an Abelian group and consider a $G$-valued function $v:\PP\rightarrow G$ on the points of a projective plane.   For any subset $S\subset \PP$ we then define $v(S)=\sum_{x\in S}v(x)$.  The function $v$ is called \emph{line invariant} if $v(L)=v(L')$ for all $L, L'\in \LL$.  When this holds we call $v(L)$ the \emph{magic constant}.

The set of line invariant functions has a really beautiful structure, but unfortunately, it includes the constant functions which are trivially line invariant because every line has the same number of points.   Since constant functions do not get at the nature of magicness, we will only refer to $v$ as a \emph{pseudomagic} function when it is both line invariant and non-constant.   If, furthermore, $v$ is actually injective, then we will call $v$ \emph{magic}.

We say a finite projective plane $\Pi=(\PP,\LL)$  is \emph{pseudomagic over an Abelian group $G$} (resp.\ {\emph{magic over $G$}) and $G$ \emph{admits a pseudomagic} (resp. \emph{magic}) \emph{function} $v$ if and only if there exists a pseudomagic (resp.\ magic) function $v:\PP\rightarrow G$.  As we will show in Theorem~\ref{thm:torsionfree}, $\Pi$ will not admit a pseudomagic function over a group $G$ unless that group contains elements of finite order.  Groups that do not contain any elements of finite order are called \emph{torsion-free} groups.

\begin{theorem}\label{thm:torsionfree}
Let $G$ be an Abelian torsion-free group and let $\Pi=(\PP,\LL)$ be a finite projective plane.  Then $\Pi$ is not pseudomagic over $G$.
\end{theorem}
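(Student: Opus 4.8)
The plan is to reduce to the same linear-algebra argument used in Proposition~\ref{RNotMagic}, but carried out over $G$ rather than $\Rb$. Let $v:\PP\to G$ be line invariant with magic constant $c$, and let $\mathbf{v}=[v(x)]_{x\in\PP}$ be the corresponding column vector with entries in $G$. As before, $A\mathbf{v}=\mathbf{c}$, where $A$ is the incidence matrix of $\Pi$ and $\mathbf{c}$ is the constant vector with every entry $c$. The obstacle is that $A^{-1}$ has rational entries, not integer ones, so we cannot directly apply $A^{-1}$ to a $G$-valued vector. The standard fix is to use the adjugate: over $\Zb$ we have $(\operatorname{adj} A)\,A = \det(A)\,I$, and $\operatorname{adj} A$ has integer entries, so it acts on $G^{n^2+n+1}$ via $\Zb$-scalar multiplication. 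Applying $\operatorname{adj} A$ to both sides of $A\mathbf{v}=\mathbf{c}$ gives $\det(A)\,\mathbf{v} = (\operatorname{adj} A)\,\mathbf{c}$.

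Next I would compute the right-hand side. Because $\mathbf{c}$ is constant, $(\operatorname{adj} A)\,\mathbf{c}$ has $x$-th entry equal to $\big(\sum_j (\operatorname{adj} A)_{x,j}\big)c$, i.e. the row sums of $\operatorname{adj} A$ times $c$. One way to pin this down without explicit computation: $A$ has all row sums equal to $n+1$ (each line has $n+1$ points) and all column sums equal to $n+1$ (each point lies on $n+1$ lines), so the all-ones vector $\mathbf{1}$ satisfies $A\mathbf{1}=(n+1)\mathbf{1}$ and $A^T\mathbf{1}=(n+1)\mathbf{1}$. Hence $(\operatorname{adj} A)\mathbf{1} = \det(A)A^{-1}\mathbf{1} = \frac{\det(A)}{n+1}\mathbf{1}$, which shows every row of $\operatorname{adj}A$ sums to $\det(A)/(n+1)$. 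Therefore $\det(A)\,\mathbf{v} = \frac{\det(A)}{n+1}\,c\cdot\mathbf{1}$ as an identity in $G^{n^2+n+1}$; comparing the $x$-th and $x'$-th coordinates gives $\det(A)\,v(x) = \det(A)\,v(x')$ for all points $x,x'$, so $\det(A)\big(v(x)-v(x')\big) = 0$ in $G$.

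Now I invoke torsion-freeness. Since $\det(A)\neq 0$ (indeed $\det(AA^T)=(n+1)^2 n^{n^2+n}\neq 0$, so $\det A$ is a nonzero integer), and $G$ is torsion-free, $\det(A)\,g=0$ forces $g=0$. Applying this to $g=v(x)-v(x')$ yields $v(x)=v(x')$ for every pair of points, so $v$ is constant. A line-invariant constant function is exactly a non-pseudomagic function by definition, so $\Pi$ is not pseudomagic over $G$, as claimed.

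The main subtlety — really the only one — is making the matrix manipulations legitimate over an abstract Abelian group: an integer matrix $M$ acts on $G^m$ by $(M\mathbf{g})_i=\sum_j M_{ij}\mathbf{g}_j$ (a finite $\Zb$-linear combination, which makes sense in any Abelian group), and the identity $(\operatorname{adj} A)A=\det(A)I$, being a polynomial identity with integer coefficients in the entries of $A$, remains valid when both sides are regarded as endomorphisms of $G^m$. Once that bookkeeping is in place the rest is immediate. I should also double-check the normalization $n+1 \mid \det(A)$; this follows from the eigenvector computation above, or alternatively from the explicit formula $\det(A)=\pm(n+1)n^{(n^2+n)/2}$ that one extracts from $\det(AA^T)=(n+1)^2n^{n^2+n}$ together with the fact that $\det(A)$ is an integer.
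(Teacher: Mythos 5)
Your proof is correct, but it takes a genuinely different route from the paper's. The paper reduces to Proposition~\ref{RNotMagic}: it passes to the subgroup $H=\langle v(x)\mid x\in\PP\rangle$, invokes the structure theorem for finitely generated torsion-free Abelian groups to get $H\cong\Zb^k$, splits $v$ into integer-valued coordinate functions $v_i$, and observes that each $v_i$ is line invariant and hence constant by the real case. You instead redo the linear algebra directly over $G$, viewing $G$ as a $\Zb$-module and replacing the inverse of the incidence matrix by its adjugate, so that $(\operatorname{adj}A)A=\det(A)I$ yields $\det(A)\bigl(v(x)-v(x')\bigr)=0$ for all points $x,x'$; torsion-freeness enters only at the last step, to cancel the nonzero integer $\det(A)$. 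All the steps check out: integer matrices act on $G^{n^2+n+1}$ and the adjugate identity is preserved under that action, the row sums of $\operatorname{adj}A$ are all equal to $\det(A)/(n+1)$ by your eigenvector computation, and $\det(A)=\pm(n+1)n^{(n^2+n)/2}\neq 0$. What your version buys is independence from the structure theorem, and it isolates exactly where torsion-freeness is used; it also establishes the relation $\det(A)\bigl(v(x)-v(x')\bigr)=0$ over an \emph{arbitrary} Abelian group, which foreshadows why torsion groups can admit pseudomagic functions (compare the sharper relation $n\bigl(v(x)-v(y)\bigr)=0$ obtained by counting in the proof of Proposition~\ref{cyclic_magic}). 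What the paper's version buys is brevity: given Proposition~\ref{RNotMagic} and the structure theorem, the reduction takes three lines.
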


\begin{proof}
Let $v$ be a line invariant $G$-valued function on $\PP$.  Let $H$ be the subgroup $\langle v(x)\mid x\in\PP\rangle$ of $G$.  $H$ is a finitely generated torsion-free Abelian group and thus $H\cong\Zb^k$ for some $k\in \Nb$.  We may therefore view $v$ as a $\Zb^k$-valued function, and we may write
  $$v=\bigoplus_{i=1}^k v_i$$
where each $v_i:\PP\rightarrow \Zb$.  Since $v(L)$ is independent of $L\in\LL$ it also holds that $v_i(L)$ is independent of $L$ for each $i$.  By Proposition~\ref{RNotMagic} each $v_i$ is a constant function and hence $v$ is constant on $\PP$ as well.
\end{proof}

\section{Magicness}
In order to find a group over which a projective plane is magic we must look at torsion groups.  Cyclic groups are a natural place to start.  We begin by classifying cyclic groups that admit a pseudomagic function for a given projective plane. Throughout the section let $\Pi=(\PP,\LL)$ be a projective plane of order $n$ and let $m \in \Nb$.

One way to attempt to find a pseudomagic function from $\PP$ to $\Zb/m\Zb$ is to create a function that is constant on a chosen line $L$ and zero on all points off of $L$.  As it turns out, if the constant is chosen carefully then the function will be line invariant.  More precisely, given any line $L \in \LL$, we define the function $v_{L}: \PP \to \Zb/m\Zb$ as follows:
\begin{equation}\label{vL}
v_{L}(x) = \left\{\begin{tabular}{lr} $\frac{m}{(n,m)}$ & if  $x \in L$ \\ 0 & if $x \not \in L$ \end{tabular}\right.
\end{equation}
Notice, when $m \mid n$, $v_{L}$ is the characteristic function of $L$.  As an example, consider the Fano plane and the group $G=\Zb/6\Zb$.  We may choose the line $L_1$, then the function $v_{L_1}$ would correspond to labeling the points on that line by $3$, see figure~\ref{fig:pseudomagic}.}

\begin{figure}[ht!]
\captionsetup[subfigure]{labelformat=simple}
\centering
\begin{tikzpicture}
    \draw[thick] (1,0) -- (5,0);
    \draw[thick] (5,0) -- (3,3.464);
    \draw[thick] (1,0) -- (3,3.464);
    \draw[thick] (3,0) -- (3,3.464);
    \draw[thick] (2,1.732) -- (5,0);
    \draw[thick] (1,0) -- (4,1.732);
    \draw[thick] (3,1.155) circle(1.155);
    \draw[thick,fill] (1,0) circle (0.1);
    \draw[thick,fill] (2,1.732) circle (0.1);
    \draw[thick,fill] (3,0) circle (0.1);
    \draw[thick,fill] (3,1.155) circle (0.1);
    \draw[thick,fill] (3,3.464) circle (0.1);
    \draw[thick,fill] (4,1.732) circle (0.1);
    \draw[thick,fill] (5,0) circle (0.1);
    \draw(0.7,0) node {$0$};
    \draw (1.7,1.732) node {$0$};
    \draw (3,-0.3) circle node {$0$};
    \draw (2.6,1.155) node {$0$};
    \draw (3,3.764) node {$3$};
    \draw (4.3,1.732) node {$3$};
    \draw (5.3,0) node {$3$};
\end{tikzpicture}
\caption{The pseudomagic function $v_{L_1}$ from the points in the Fano Plane to $\Zb/6\Zb$.}
\label{fig:pseudomagic}
\end{figure}

Observe that the sum along any line is then exactly $3 \in \Zb/6\Zb$.  We now show more generally that the functions $v_{L}$ for each $L \in \LL$ are always line invariant.  This fact relies directly on the incidence structure of finite projective planes.

\begin{lemma}\label{magicline}
$v_L: \PP \to \Zb/m\Zb$ is line invariant.
\end{lemma}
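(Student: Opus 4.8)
The plan is to compute the sum $v_L(L')$ for an arbitrary line $L' \in \LL$ and check it does not depend on $L'$. The only structural input needed is how $L'$ meets $L$: by Axiom 2 any two distinct lines meet in exactly one point, so I split into two cases, $L' = L$ and $L' \neq L$.

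First, for $L' = L$, every one of the $n+1$ points of $L$ is labeled $\tfrac{m}{(n,m)}$, so $v_L(L) = (n+1)\tfrac{m}{(n,m)}$ in $\Zb/m\Zb$. Second, for $L' \neq L$, the intersection $L \cap L'$ is a single point $x$, which contributes $\tfrac{m}{(n,m)}$, while the remaining $n$ points of $L'$ lie off $L$ and contribute $0$; hence $v_L(L') = \tfrac{m}{(n,m)}$. So I must show
$$ (n+1)\frac{m}{(n,m)} \equiv \frac{m}{(n,m)} \pmod{m}, $$
i.e.\ $n \cdot \tfrac{m}{(n,m)} \equiv 0 \pmod m$. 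This is the one computational point, and it is immediate: writing $d = (n,m)$, we have $n = d\,n'$ and $m = d\,m'$ with $(n',m')=1$, so $n \cdot \tfrac{m}{d} = d n' \cdot m' = n' m \equiv 0 \pmod m$. Therefore $v_L(L') = \tfrac{m}{(n,m)}$ for \emph{every} line $L'$, constant in $L'$, which is exactly line invariance.

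There is essentially no obstacle here; the proof is a two-line case analysis powered by the incidence axiom, with the divisibility fact $n\cdot\tfrac{m}{(n,m)}\equiv 0 \pmod m$ as the only thing to verify. The one point worth stating cleanly is why $\tfrac{m}{(n,m)}$ is the right constant: it is precisely the order-reducing factor that kills the "extra'' contribution of the $n$ off-diagonal points on $L$ when $L' = L$, so that a line through $L$ at one point and a line equal to $L$ give the same sum. I would phrase the argument so that this choice looks forced rather than magical, then record the common value $\tfrac{m}{(n,m)}$ as the magic constant of $v_L$.
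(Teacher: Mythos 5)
Your proof is correct and follows the same route as the paper: split into the cases $L'=L$ and $L'\neq L$, use the single-point intersection axiom for the latter, and verify $n\cdot\frac{m}{(n,m)}\equiv 0 \pmod m$ (which the paper states as $m \mid \frac{nm}{(n,m)}$) for the former. Your explicit factorization $n=dn'$, $m=dm'$ just spells out that divisibility step slightly more fully.
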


\begin{proof}
Observe that we have
	\begin{equation} \label{std_func}
        v_L(L) = \sum_{x \in L} \frac{m}{(n,m)} \equiv \frac{(n+1)m}{(n,m)} \equiv \frac{nm}{(n,m)}+\frac{m}{(n,m)}\mod m.
	\end{equation}
However $m \mid \frac{nm}{(n,m)}$ so $v_L(L)=\frac{m}{(n,m)}\mod m$.  Let  $L'\in \LL$ be any other line.  We know that $L'$ intersects $L$ in exactly one point, $x$.  Thus, $v_L(x)=\frac{m}{(n,m)}$ and $v_L(x')=0$ for all other $x'\in L'$. It follows that $v_L(L')=\frac{m}{(n,m)}$ as well.
\end{proof}

\begin{cor}\label{nonconstant} If $(n,m)>1$, then $\Pi$ is pseudomagic over $\Zb/m\Zb$.
\end{cor}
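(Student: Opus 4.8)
The plan is to apply Lemma~\ref{magicline} directly and simply verify that the function $v_L$ is non-constant under the hypothesis $(n,m)>1$. Fix any line $L \in \LL$. By Lemma~\ref{magicline}, $v_L : \PP \to \Zb/m\Zb$ is line invariant, so the only remaining task is to exhibit two points on which $v_L$ takes different values.

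First I would check that the nonzero value of $v_L$ is genuinely nonzero in $\Zb/m\Zb$. By definition $v_L(x) = \frac{m}{(n,m)}$ for $x \in L$. Since $(n,m) > 1$, the integer $\frac{m}{(n,m)}$ is a proper divisor of $m$, in particular $0 < \frac{m}{(n,m)} < m$, so $\frac{m}{(n,m)} \not\equiv 0 \pmod m$. Hence $v_L$ takes the nonzero value $\frac{m}{(n,m)}$ on every point of $L$.

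Next I would produce a point off $L$, where $v_L$ vanishes. By the Fact in Section~\ref{proj_plane}, a finite projective plane of order $n$ has $|\PP| = n^2+n+1$ points while $|L| = n+1$; since $n^2+n+1 > n+1$, the line $L$ is a proper subset of $\PP$, so there exists $x_0 \in \PP \setminus L$, and $v_L(x_0) = 0$. Therefore $v_L$ is non-constant. Being both line invariant and non-constant, $v_L$ is a pseudomagic function $\PP \to \Zb/m\Zb$, which is exactly the statement that $\Pi$ is pseudomagic over $\Zb/m\Zb$.

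There is essentially no obstacle here: the corollary is an immediate packaging of Lemma~\ref{magicline} together with the elementary observations that $\frac{m}{(n,m)}\not\equiv 0\pmod m$ when $(n,m)>1$ and that $L\subsetneq\PP$. The only point requiring a moment's care is making explicit why $\frac{m}{(n,m)}$ is a nonzero residue, which is where the hypothesis $(n,m)>1$ is used.
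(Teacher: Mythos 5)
Your proof is correct and follows essentially the same route as the paper's: both reduce to Lemma~\ref{magicline} and observe that $v_L$ is non-constant because $\frac{m}{(n,m)}\not\equiv 0 \pmod m$ exactly when $(n,m)>1$. You simply spell out the (easy) detail that $L\subsetneq\PP$, which the paper leaves implicit.
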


\begin{proof}
$v_L=0$ if and only if $(n,m)=1$.  Hence when $(n,m) \neq 1$, the function $v_L$ is a pseudomagic function on $\PP$ for each line $L \in \LL$.
\end{proof}

It turns out these are the only cyclic groups for which $\Pi$ can be made pseudomagic.

\begin{prop}\label{cyclic_magic} $\Pi$ is pseudomagic over $\Zb/m\Zb$ if and only if $(n,m)\neq 1$. Furthermore, $\Pi$ is never magic over $\Zb/m\Zb$.
\end{prop}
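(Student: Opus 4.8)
The plan is to deduce both assertions from a single structural fact: \emph{if $v:\PP\to\Zb/m\Zb$ is line invariant, then $n\mathbf v$ is a constant vector.} One cannot simply imitate Proposition~\ref{RNotMagic}, since the incidence matrix $A$ need not be invertible over $\Zb/m\Zb$: from $\det(AA^T)=(n+1)^2n^{n^2+n}$ one gets $\det A=\pm(n+1)n^{n(n+1)/2}$, which is not a unit mod $m$ whenever $m$ shares a prime factor with $n(n+1)$, and this can occur even when $(n,m)=1$ (for instance $n=2$, $m=3$, the Fano plane over $\Zb/3\Zb$). So the correct, slightly weaker output is that $n\mathbf v$ rather than $\mathbf v$ itself is forced to be constant.

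To establish the structural fact I would use two incidence identities. The text already records $AA^T=nI+J$, where $J$ is the all-ones matrix. The point--line dual computation --- two distinct points lie on a unique common line, and each point lies on exactly $n+1$ lines --- gives $A^TA=nI+J$ as well. Now suppose $v$ is line invariant, so $A\mathbf v=c\mathbf 1$ with $c$ the magic constant. Left-multiplying by $A^T$ and using that $A^T\mathbf 1=(n+1)\mathbf 1$ yields $(nI+J)\mathbf v=c(n+1)\mathbf 1$. Since $J\mathbf v=v(\PP)\,\mathbf 1$ is itself a constant vector, rearranging gives $n\mathbf v=\big(c(n+1)-v(\PP)\big)\mathbf 1$, which is constant.

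From here the proposition follows quickly. For the first statement, the ``if'' direction is exactly Corollary~\ref{nonconstant}; for ``only if'', suppose $(n,m)=1$, so that $n$ is a unit in $\Zb/m\Zb$ and hence $\mathbf v=n^{-1}(n\mathbf v)$ is constant, meaning $\Pi$ is not pseudomagic over $\Zb/m\Zb$. For ``never magic'', the fact that $n\mathbf v$ is constant means every difference $v(x)-v(y)$ lies in the subgroup $\{z\in\Zb/m\Zb:nz=0\}$, which is the unique cyclic subgroup of $\Zb/m\Zb$ of order $(n,m)$. Thus the image of $v$ lies in a single coset of this subgroup, so it has at most $(n,m)\le n<n^2+n+1=|\PP|$ elements, and $v$ cannot be injective.

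The one step requiring care is the identity $A^TA=nI+J$: it is the point--line dual of the $AA^T$ computation already in the text, relying on Axiom~1 together with the Fact that each point lies on $n+1$ lines. Everything else is routine linear algebra over $\Zb/m\Zb$, together with the elementary description of the $n$-torsion subgroup of $\Zb/m\Zb$.
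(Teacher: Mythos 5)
Your proposal is correct, and it reaches the paper's key intermediate fact --- that $nv(x)$ is independent of $x\in\PP$ --- by a genuinely different route. The paper fixes two points $a,b$ on a line $L$ and double-counts $v(L^c)$ using the pencil of lines through $a$ (and then through $b$), obtaining only the comparison $nv(a)=nv(b)$. You instead multiply $A\mathbf v=c\mathbf 1$ by $A^T$ and invoke the dual incidence identity $A^TA=nI+J$ (which you correctly justify from Axiom~1 and the fact that each point lies on $n+1$ lines); unwinding the matrix algebra, this amounts to summing $v(L')$ over all $n+1$ lines through a single point $x$ and comparing with the global sum $v(\PP)$, which yields the sharper statement $nv(x)=(n+1)c-v(\PP)$ for every $x$, i.e.\ an explicit value for the constant rather than just its independence of $x$. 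Your opening observation about why the argument of Proposition~\ref{RNotMagic} cannot be transplanted directly --- $\det A=\pm(n+1)n^{n(n+1)/2}$ need not be a unit mod $m$ even when $(n,m)=1$ --- is accurate and is a useful piece of motivation the paper omits. From the structural fact onward, your deductions (invertibility of $n$ when $(n,m)=1$ forces $v$ constant; the image of $v$ lies in a single coset of the $n$-torsion subgroup of order $(n,m)\le n<|\PP|$, precluding injectivity) coincide with the paper's. The only trade-off is that your route requires introducing and verifying the second incidence identity $A^TA=nI+J$, whereas the paper's pencil-of-lines count uses only facts already stated; in exchange you get a cleaner, more global computation and the exact value of $n\mathbf v$.
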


\begin{proof}
The ``if" is proven in Corollary~\ref{nonconstant}. For the other direction assume there is a line invariant function $v: \PP \to \Zb/m\Zb$ with magic constant $g\in \Zb/m\Zb$.  Let $a, b\in \PP$ and let $L=\overline{ab}$ be the line containing $a$ and $b$ and let $L^c$ be the set of points not on $L$.  Each point of $L^c$ is on exactly one line in the set of lines $\LL^a\backslash\{L\}$ (recall $\LL^a$ is all lines through $a$).  Therefore,
	\[ v(L^c)=\sum_{L'\in \LL^a\backslash\{L\}} (v(L')-v(a))= ng-nv(a), \]
since there are $n$ lines in $\LL^a\backslash\{L\}$ and $a$ is not in $L^c$.  Similarly, $v(L^c)=ng-nv(b)$ and hence $nv(a)=nv(b)$ in $\Zb/m\Zb$.  Since $a, b$ were arbitrary, we may conclude that $nv(x)$ is independent of $x \in \PP$.  Therefore, $n(v(x)-v(y))=0$ for all $x, y\in \PP$.  Hence, for all $x, y\in \PP, v(x)-v(y)$  is in the kernel of the homomorphism $\phi_n: t\mapsto nt$ in $\Zb/m\Zb$, and so each $v(x)$ is in the same coset of the kernel.  The homomorphism has $|\text{Ker}(\phi_n)|=(n,m)$, and so a line invariant function $v:\PP \mapsto \Zb/m\Zb$ can take on up to $(n,m)$ different values.  Thus, when $(n,m)=1$, $v$ is a constant function and not a pseudomagic function.  Furthermore, $(n,m)<n^2+n+1=|\PP|$, so $v$ can never be magic.
\end{proof}

From Corollary~\ref{nonconstant}, it is not hard to find a group $G$ which admits a magic function on $\Pi$.  Take one copy of $\Zb/n\Zb$ for every line in $\LL$.  That is let $G=(\Zb/n\Zb)^k$ where $k=|\LL|=n^2+n+1$.  Next, choose an enumeration of $\LL=\{L_1, L_2,\ldots, L_k\}$ and define $v:\PP \rightarrow  (\Zb/n\Zb)^k$ as $v(x)=(v_{L_1}(x), v_{L_2}(x),\ldots, v_{L_k}(x))$.
By Corollary~\ref{nonconstant} this is a pseudomagic function.  However, it is also magic because for any two points $x_1, x_2\in L$ there is a line that contains one of them but not the other.

This construction seems inefficient as $|G|$ is much larger than $|\PP|$ and the exponent $k$ depends on $\Pi$.  Next, we find the smallest $r$ so that $\Pi$ is magic over $(\Zb/n\Zb)^r$.  Since $|\PP|=n^2+n+1$ we know $r\geq 3$, and in fact, we show $r=3$ works.  Our general proof relies on having $n \geq 5$ and hence we treat the cases $n=2$, $3$, and $4$  separately.

\begin{theorem}\label{MinimalMagicGroup}
If $\Pi=(\PP,\LL)$ is a projective plane of order $n\geq 5$ then $\Pi$ is magic for the group $G=(\Zb/n\Zb)^3$.
\end{theorem}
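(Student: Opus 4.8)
The plan is to exhibit an explicit injective, line invariant function $v:\PP\to(\Zb/n\Zb)^3$; since $|\PP|=n^2+n+1\ge 3$ such a $v$ is automatically non-constant, hence magic, and so $\Pi$ is magic over $(\Zb/n\Zb)^3$ by definition. The first step is to generalize the mechanism behind Lemma~\ref{magicline} and Corollary~\ref{nonconstant}: for \emph{any} assignment of weights $\mathbf{c}_L\in(\Zb/n\Zb)^3$ to the lines $L\in\LL$, the function $v(x)=\sum_{L\ni x}\mathbf{c}_L$ is line invariant, with magic constant $\sum_{L\in\LL}\mathbf{c}_L$ — indeed $v(L')=\sum_{x\in L'}\sum_{L\ni x}\mathbf{c}_L=(n+1)\mathbf{c}_{L'}+\sum_{L\ne L'}\mathbf{c}_L=n\,\mathbf{c}_{L'}+\sum_L\mathbf{c}_L$, which equals $\sum_L\mathbf{c}_L$ in $(\Zb/n\Zb)^3$ (equivalently, $v$ is a $(\Zb/n\Zb)^3$-combination of the indicator functions $v_L$). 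So the whole problem reduces to: choose weights $\mathbf{c}_L$ so that $x\mapsto\sum_{L\ni x}\mathbf{c}_L$ is injective.

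Next I would fix a triangle, i.e.\ three non-collinear points $A,B,C$ (these exist by the third projective-plane axiom), and place weights only on the three pencils $\LL^A,\LL^B,\LL^C$, arranged so that a line through $A$ contributes only to the first coordinate, a line through $B$ only to the second, a line through $C$ only to the third (the three sides $\overline{AB},\overline{BC},\overline{AC}$ lie in two pencils and pick up two nonzero coordinates). This amounts to choosing three functions $f_A:\LL^A\to\Zb/n\Zb$, $f_B:\LL^B\to\Zb/n\Zb$, $f_C:\LL^C\to\Zb/n\Zb$. A short pencil computation then makes $v$ completely explicit: for $x\notin\{A,B,C\}$ one has $v(x)=\bigl(f_A(\overline{xA}),\,f_B(\overline{xB}),\,f_C(\overline{xC})\bigr)$, while $v(A)=\bigl(\sum_{\ell\in\LL^A}f_A(\ell),\,f_B(\overline{AB}),\,f_C(\overline{AC})\bigr)$ and similarly for $v(B),v(C)$. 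Since $x$ is recovered from any two of the lines $\overline{xA},\overline{xB},\overline{xC}$, the points fall into seven types — the three vertices, the three sides with their endpoints removed ($n-1$ points each), and the $(n-1)^2$ ``interior'' points lying off all three sides — and on each type the value of $v$ has a transparent form. The counts add up exactly, $3+3(n-1)+(n-1)^2=n^2+n+1$, so the construction is tight: $v$ is injective if and only if these seven value-sets are pairwise disjoint.

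The core of the argument is choosing $f_A,f_B,f_C$ to make disjointness hold. Demanding that $f_A$ be injective on the $n-1$ generic lines through $A$ (and likewise for $B$ and $C$) makes the interior and each punctured side internally injective, and forces the generic values of $f_A$ to be $\Zb/n\Zb\setminus\{w_A\}$ for a single omitted value $w_A$; the remaining values $f_A(\overline{AB}),f_A(\overline{AC})$ and their analogues for $B,C$ are then chosen so that each punctured side and each vertex value is pushed outside the box $(\Zb/n\Zb\setminus\{w_A\})\times(\Zb/n\Zb\setminus\{w_B\})\times(\Zb/n\Zb\setminus\{w_C\})$ that contains the interior, and so that the finitely many remaining pairwise-disjointness inequalities among these parameters are all satisfied.

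I expect this last bookkeeping to be the main obstacle, and in particular the placement of the three vertex values $v(A),v(B),v(C)$: each of their coordinates is a sum over a whole pencil, namely $\sum_{\ell\in\LL^A}f_A(\ell)=\bigl(\sum_{t\in\Zb/n\Zb}t\bigr)-w_A+f_A(\overline{AB})+f_A(\overline{AC})$, and because $\sum_{t\in\Zb/n\Zb}t\equiv 0\pmod n$ when $n$ is odd but $\equiv n/2\pmod n$ when $n$ is even, the two parities require slightly different choices of the $f$'s (this is presumably why the cases $n=2,3,4$ are split off). The hypothesis $n\ge5$ enters here: several prescribed residues of $\Zb/n\Zb$ must be simultaneously distinct and nonzero, and there is room for this precisely once $n$ is large enough. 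Once such a choice is exhibited, $v$ is injective, line invariant, and non-constant, hence magic, so $\Pi$ is magic over $(\Zb/n\Zb)^3$.
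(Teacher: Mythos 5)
Your reduction is correct and is in fact the paper's starting point: any $(\Zb/n\Zb)^3$-weighted combination of the indicator functions $v_L$ is line invariant, so the problem is purely to choose weights making $x\mapsto\sum_{L\ni x}\mathbf{c}_L$ injective, and the paper's $v_1,v_2$ are exactly your $f_A,f_B$ (weights on the pencils at two base points). The gap is in the step you defer as ``bookkeeping'': for the strictly diagonal three-pencil ansatz — each pencil feeding exactly one coordinate — the required choice of $f_A,f_B,f_C$ \emph{does not exist} when $n$ is odd. To see this, note first that internal injectivity of the punctured side $\overline{AB}$ already forces $f_C$ to be injective on the $n-1$ generic lines through $C$ (those lines meet $\overline{AB}\setminus\{A,B\}$ bijectively), and cyclically; so the generic values of $f_A$ are $\Zb/n\Zb\setminus\{w_A\}$, etc., as you say. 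Write $\alpha_{AB}=f_A(\overline{AB})$, $\beta_{AB}=f_B(\overline{AB})$, and so on. Then the interior values form a graph over $(\neq w_A)\times(\neq w_B)$ with third coordinate $\neq w_C$, the punctured side $\overline{AB}$ has value set $\{(\alpha_{AB},\beta_{AB},c):c\neq w_C\}$, and for $n$ odd (where $\sum_t t=0$) one gets $v(A)=(\alpha_{AB}+\alpha_{AC}-w_A,\ \beta_{AB},\ \gamma_{AC})$. Disjointness of side $\overline{AB}$ from the interior forces $\alpha_{AB}=w_A$ or $\beta_{AB}=w_B$; but $v(A)\notin\{(\alpha_{AC},b,\gamma_{AC}):b\neq w_B\}$ (the side $\overline{AC}$ values, which automatically share $v(A)$'s third coordinate, and its first coordinate unless $\alpha_{AB}\neq w_A$) forces $\alpha_{AB}\neq w_A$ or $\beta_{AB}=w_B$. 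Together these give $\beta_{AB}=w_B$. Chasing the analogous pairs of conditions around the triangle forces, in turn, $\gamma_{AC}=w_C$, $\gamma_{BC}=w_C$, $\alpha_{AB}=w_A$, $\beta_{BC}=w_B$, $\alpha_{AC}=w_A$, and then $v(A)=v(B)=v(C)=(w_A,w_B,w_C)$. So the seven value-sets cannot be made pairwise disjoint, and no amount of clever parameter choice rescues the scheme for odd $n$.

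This is precisely why the paper's third function is not supported on a single pencil: it takes $v_3=\sum_k(k-1)v_{L''_k}+(n-1)v_{L_0}+2v_J$, where the $L''_k$ form a pencil at a third point but $L_0$ lies in the first pencil and $J$ is an extra, carefully chosen line through none of the three base points. Those two ``rogue'' terms are exactly what break the forced collisions above (and choosing $J$ so that it does so is where the hypothesis $n\geq 5$ is really used — it guarantees enough points on $L_1'$ to pick $w_{h,1}$ avoiding four specified points; the split of $n=2,3,4$ is for this reason, not parity). Your framework and your identification of the pencil sums $\sum_{\ell\in\LL^A}f_A(\ell)$ as the danger point are both right, but the construction must allow some lines to contribute outside their pencil's designated coordinate, and exhibiting such a choice is the actual content of the theorem rather than routine bookkeeping.
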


\begin{proof}
The plan is to create three separate pseudomagic functions $v_1$, $v_2$, and $v_3$ from $\PP$ to $\Zb/n\Zb$ which together define a magic function $(v_1, v_2, v_3): \PP \to G$.

To begin, we label some points and lines for reference.  Let $x$ be any point in $\PP$ and let $L_0, L_1, \dots, L_n$ be an enumeration of the $n+1$ lines through $x$.  Next, let $y$ be another point on $L_n$ and let $L'_0, \dots, L'_{n-1}$ denote the other $n$ lines through $y$.  Then, for each $1 \leq i,j \leq n-1$, we let $w_{i,j}=L_i\cap L_j'$ as in figure~\ref{fig:pqs}.   Finally, let $z_1, z_2, \dots, z_{n-1}$ denote the points in $L_n \setminus \{x, y\}$.  For now the choice of the $z_k$ is arbitrary, but later we will be more specific in our labeling.

\begin{figure}[!htbp]
\scalebox{1.15}{
    \begin{subfigure}[b]{.4\linewidth}
        \[\xy
        (0,0)*{};(30,0)*{}**\dir{-};(15,26)*{};(30,0)*{}**\dir{-};(15,26)*{};(0,0)*{}**\dir{-};
        (0,0)*{\bullet};(-2,-2.5)*{w_{0,0}};(30,0)*{\bullet};(32,-2.5)*{y};(15,26)*{\bullet};(15,28.5)*{x};
        (10,0)*{};(15,26)*{}**\dir{-}; (10,-2.5)*{w_{i,0}};(10,0)*{\bullet};
        (30,0)*{};(5,8.6)*{}**\dir{-};(5,8.6)*{\bullet};(3.5, 10.8)*{w_{0,j}~~~};
        (11.3,6.3)*{\bullet};(15,7)*{w_{i,j}};
        \endxy\]
        \caption{}
        \label{fig:pqs}
    \end{subfigure}
    \begin{subfigure}[b]{.4\linewidth}
        \[\xy
        (0,0)*{};(30,0)*{}**\dir{-};(15,26)*{};(30,0)*{}**\dir{-};(15,26)*{};(0,0)*{}**\dir{-};
        (0,0)*{\bullet};(-2,-2.5)*{w_{0,0}};(30,0)*{\bullet};
        (30,0)*{};(10,17.3)*{}**\dir{-};(10,17.3)*{\bullet};(8.5,19.3 )*{w_{0,1}~~~~};(32,-2.5)*{y};
        (0,0)*{};(20,17.3)*{}**\dir{-};(20,17.3)*{\bullet};(21.5, 19.3)*{~z_2};
        (15,13)*{\bullet};(15,16.5)*{w_{1,1}};
        (15,0)*{\bullet};(15,-2.5)*{w_{1,0}};
        (7.5,13)*{\bullet};
        (7.5,13)*{};(20,17.3)*{}**\crv{~*=<2pt>{.}(16.5,-15)};(19,9.6)*{\bullet};
        (3.75,6.5)*{\bullet};(22.5,13)*{\bullet};(27,14)*{~z_{n-2}};
        (3.75,6.5)*{};(22.5,13)*{}**\crv{( 19,-9)};(21.3,7.5)*{\bullet};
        (22.5,2.5)*{w_{h,1}};(1.25,7)*{w_{0,h'}~~~~};
        (0,0)*{};(25,8.8)*{}**\dir{-};(25,8.8)*{\bullet};(27.5,9)*{~z_1};
        \endxy\]
        \caption{}
        \label{fig:rk}
    \end{subfigure}
    }
    \caption{Labeling $\PP$}
    \label{fig:labels}
\end{figure}

From this point forward a couple of minor details in the proof depend on the parity of $n$.  The main technical difference stems from the fact that the sum of all of the elements in $\Zb/n\Zb$ is $0$ when $n$ is odd, but is $\frac{n}{2}$ when $n$ is even.  As it turns out, our construction is unaffected by this difference, but for simplicity we choose to deal with $n$ odd first, and then explain why it works for $n$ even as well.

For $n$ odd define the first two pseudo-magic functions as follows:
\begin{equation}\label{V1V2} v_1 = \sum_{k=1}^n (k-1)v_{L_k} \qquad v_2 = \sum_{k=1}^n (k-1)v_{L'_k}
\end{equation}
Since $x$ is on each $L_k$ we have $v_1(x)=\sum_{k=1}^n (k-1) = 0$ in $\Zb/n\Zb$.  Similarly, $v_2(y)=0$.  The function $v':=(v_1, v_2): \PP \to (\Zb/n\Zb)^2$ then already has unique values on most of the points. The only equalities are the following:
\begin{equation}\label{equalities}\begin{array}{ll}
v'(w_{0,0})=v'(w_{0,1})=v'(w_{1,0})= v'(w_{1,1})= (0,0) & \\
v'(w_{0,i})=v'(w_{1,i}) = (0,i-1) & 2\leq i\leq n-1\\
v'(w_{i,0})=v'(w_{i,1}) = (i-1, 0) & 2\leq i\leq n-1\\
v'(z_i)=v'(z_j) = (n-1,n-1) & 1\leq i,j \leq n-1
\end{array}
\end{equation}
since $v'(w_{i,j})=(i-1,j-1)$ for $1 \leq i, j\leq n-1$. 

We now create a third function, $v_3$, to distinguish points which have equal values on $v'$.  This is a delicate construction which requires a careful ordering of the points $z_k$ with respect to other points on the plane. Let $z_{2}=L_n\cap \overline{w_{0,0}w_{1,1}}$ and  let $J'$ be the line $\overline{w_{1,0} z_2}$.  Next, choose an $h$ so that $w_{h,1}\in L_1'\backslash\{w_{0,1}, y, w_{1,1}, J'\cap L'_1\}$.  This is possible since $n\geq 5$ and so there must be at least $6$ points on a line.  Now let $z_1=\overline{w_{0,0}w_{h,1}}\cap L_n$, and $z_{n-2}=\overline{w_{1,0}w_{h,1}}\cap L_n$.  Once again, since $n\geq 5$, $z_{n-2}\neq z_2$.
Label the remaining points on $L_n$ as $\{z_3, \ldots z_{n-3}, z_{n-1}\}$, and let $L''_i=\overline{w_{0,0}z_i}$ for $1 \leq i \leq n-1$.  Define the line $J=\overline{w_{1,0}z_{n-2}}$.  For future reference we will let $w_{0,h'}=L_0\cap J$.  Figure~\ref{fig:rk} depicts the specific labeling that we have described with $J'$ dashed and $J$ solid.

We now define $v_3:\PP\rightarrow \Zb/ n\Zb$.
\begin{equation}\label{V3}
v_3=\sum_{k=1}^{n-1}(k-1)v_{L''_k}+(n-1)v_{L_0}+2v_J
\end{equation}
All that is left is to check that $v_3$ differentiates the points that had equal values under $v'$.  First, check the $z_k$.  Observe that $v_3(z_k)=k-1$ if $1\leq k\leq n-1$ with $k\neq n-2$,  and $v_3(z_{n-2})=n-1$.  The difference for $z_{n-2}$ is the inclusion of $2v_J$ in the definition of $v_3$.

Next, check the pairs $w_{i,0}$ and $w_{i,1}$.  For $2\leq i\leq n-1$, $v_3(w_{i,0})=0$.  For each of those $i$ except $h$ there is a unique $j$ with $2\leq j\leq n-1$ so that $w_{i,1}\in L''_j$.  In these cases $v_3(w_{i,1})=j-1\neq 0$.  For $h$, $w_{h,1}=L''_1\cap J$, so $v_3(w_{h,1})=2$.

Now,  check the pairs $w{0,1}$ and $w_{1,i}$.  For each $1\leq i\leq n-1$ there is a unique $j$ with $1\leq j\leq n-1$ so that $w_{1,i}\in L''_j$  we therefore have $v_3(w_{1,i})=j-1\neq n-1$. Recall, $h'$  is defined so that $w_{0,h'}=J\cap L_0$. For $1\leq i\leq n-1$ with $i\neq h'$ we have $v_3(w_{0,i})=n-1$, and $v_3(w_{0,h'})=1$.    We need to show $v_3(w_{1,h'})\neq 1$.  We know $v_3(w_{1,1})=1$ since $w_{1,1}\in L_2''$.
If $h'=1$ then  $J=\overline{w_{0,1}w_{h,1}}$, but $w_{1,0}\in J$ and $w_{1,0} \not \in \overline{w_{0,1}w_{h,1}}$, so $h'\neq 1$.

Finally, check  $w_{0,0}$, $w_{0,1}$, $w_{1,0}$, and $w_{1,1}$.  Observe that $v_3(w_{0,1})=n-1$, $v_3(w_{1,0})=2$, $v_3(w_{1,1})= 1$, and $v(w_{0,0})=0$.

Thus $v=(v_1, v_2, v_3):\PP\rightarrow (\Zb/n\Zb)^3$ is injective and hence magic when $n$ is odd.  In the case when $n$ is even define $v_1, v_2, v_3$ in the same fashion, but now $v_1(x)=v_2(y)=v_3(w_{0,0})=n/2$.  Since $n \geq 5$ we have $2<\frac{n}{2} < n-1$ and thus these changes do not impact the proof.
\end{proof}

We now treat the cases $n=2$, 3, and 4 separately.

\begin{lemma}\label{n2to4} The projective planes of order  $n=2$, $3$, and $4$ are magic over $G=(\Zb/n\Zb)^3$.
\end{lemma}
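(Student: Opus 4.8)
The plan is to treat the orders $n=2$, $3$, $4$ one at a time. For each of these orders there is, up to isomorphism, a unique projective plane — this is classical for orders $\leq 8$ — namely the plane $\Pi_n$ constructed from $\Fb_n^3$ in Section~\ref{proj_plane}, so it suffices to build a magic function on $\Pi_n$. Throughout I would use the principle that line invariance never needs to be rechecked: by Lemma~\ref{magicline} with modulus $m=n$ each $v_L$ is the characteristic function of $L$ and is line invariant, and the line invariant functions form a $\Zb/n\Zb$-submodule of the set of all functions $\PP\to\Zb/n\Zb$, so every combination $\sum_{L}c_L v_L$ is line invariant. Hence for each $n$ it is enough to exhibit three such combinations $v_1,v_2,v_3$ for which $v=(v_1,v_2,v_3):\PP\to(\Zb/n\Zb)^3$ is injective; since $|\PP|=n^2+n+1<n^3=|(\Zb/n\Zb)^3|$ this is not obstructed for cardinality reasons, and injectivity also forces non-constancy, so such a $v$ is magic.

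The case $n=2$ has essentially a one-line proof. Over $\Fb_2$ there is no scaling, so the points of $\Pi_2$ are exactly the seven nonzero vectors of $\Fb_2^3$, and three of them lie on a common line precisely when they sum to $0$ (a $2$-dimensional subspace is $\{0,a,b,a+b\}$, whose nonzero vectors sum to $2a+2b=0$). Therefore the tautological inclusion $\PP\hookrightarrow\Fb_2^3=(\Zb/2\Zb)^3$ is injective and sums to $0$ along every line, so it is already magic.

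For $n=3$ and $n=4$ I would fix the $n^2+n+1$ points of $\Pi_n$ (the $1$-dimensional subspaces of $\Fb_n^3$, each normalized so its first nonzero coordinate is $1$) together with the $n^2+n+1$ lines, and then simply write down an explicit magic labeling and verify it. To reduce the verification to the single task of checking injectivity, I would build each coordinate $v_t$ as a short $\Zb/n\Zb$-combination of characteristic functions of lines, in the spirit of~\eqref{V1V2} and~\eqref{V3}: take two pencils through two fixed points to get $v_1$ and $v_2$, which (exactly as in the proof of Theorem~\ref{MinimalMagicGroup}) already separate all but a bounded list of pairs of points, and then add a short correction $v_3$ tailored to the remaining coincidences of each specific plane. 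Because $13$ and $21$ are small, constructing this correction and confirming injectivity of the resulting table of values is a finite, routine — and if one likes, computer-assisted — check, which I would present as a table.

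The genuine obstacle is precisely what excludes these orders from Theorem~\ref{MinimalMagicGroup}: in that proof the auxiliary point $w_{h,1}$ must be chosen on a line while avoiding up to five prescribed points, which requires $n\geq 5$ so that every line has at least six points. For $n\leq 4$ that pigeonhole step can fail, so the uniform definition of $v_3$ does not go through and must be replaced by an ad hoc choice for each of the three planes. This is not conceptually deep, only case work; I expect the order-$4$ plane, with $21$ points, to be the most tedious but still entirely mechanical.
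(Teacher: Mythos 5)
Your overall strategy coincides with the paper's: invoke uniqueness of the projective plane of each order $n\le 4$ so that it suffices to label $\Pi_n$, observe that linear combinations of the characteristic functions $v_L$ are automatically line invariant, and reduce everything to checking injectivity of a triple $(v_1,v_2,v_3)$. Your $n=2$ case is complete, and in fact slightly more self-contained than the paper's, which simply asserts that $a\mapsto(x,y,z)$ is magic; you supply the reason the line sums vanish (the nonzero vectors of a $2$-dimensional subspace of $\Fb_2^3$ sum to zero).

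The gap is in the cases $n=3$ and $n=4$. The lemma is an existence statement, and for these two orders you never exhibit the magic function: you assert that after taking $v_1,v_2$ from two pencils as in \eqref{V1V2}, a ``short correction'' $v_3$ resolving the residual coincidences can be found by a finite, mechanical search. Nothing in your argument guarantees that such a correction exists within the family you propose --- and the reason $n\ge 5$ is required in Theorem~\ref{MinimalMagicGroup} is precisely that the generic choice of auxiliary lines breaks down for small $n$, so it is not a priori clear that an ad hoc repair is available at $n=4$. It is, but showing so is the actual content of the proof: the paper takes $v_3=3v_{L_0}+v_{L''_2}+2v_{L''_3}+v_J$ for a carefully chosen line $J$ in the order-$4$ plane, and for $n=3$ it sidesteps the pencil construction entirely by writing each $v_i$ as an explicit homogeneous quadratic in the coordinates of $[x,y,z]$ (well defined since $c^{n-1}=1$ for $c\in\Fb_n^*$) and checking directly that the triple is injective. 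So your plan is sound and would succeed, but the computational core --- the explicit labelings for orders $3$ and $4$ --- is exactly what the proof must contain and is missing.
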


\begin{proof}
For each $2\leq n\leq 4$ there is a unique projective plane of order $n$ \cite{MacInnes} and  hence we may use the construction of the plane $\Pi_n$ described earlier.  Recall, that each point $a\in \PP_n$ is of the form $a=[x, y, z]$ with $x, y, z\in \Fb_n$.  In the $n=2$ case we have the Fano Plane (see Figure~\ref{fig:fano}) and the function
$$v(a)=(x,y,z)$$
is magic.  In the $n=3$ case, define the following functions
$$
    \begin{array}{rcl}
    v_1(a) &=&x^2+z^2+xy+2yz+2xz\\
    v_2(a) &=& y^2+x^2+yz+2zx+2yx\\
    v_3(a) &=&  z^2+y^2+zx+2xy+2zy
    \end{array}
$$
 and then $v=(v_1, v_2, v_3)$ is magic.

These functions are well defined on the sets $[x,y,z]$,  since the components are homogeneous polynomials of degree $n-1$ and for any $c\in \Fb_n$ we have $c^{n-1}=1$.  One can directly check these that these functions are magic.

For $n=4$ we need a significant modification of our construction from Theorem~\ref{MinimalMagicGroup}.  We are working with $\Fb_4=\{0,1,\alpha, \alpha+1\}$ where $\alpha^2=\alpha+1$ and we set $a_0=0$, $a_1=1$, $a_2=\alpha$, and $a_3=\alpha+1$.   We then let $w_{i,0} = [1,0,a_i]$, $w_{0,i}=[1,a_i,0]$, and $z_i=[0,1,a_i]$ for $0\leq i \leq 3$, and we let $y=[0,0,1]$ and $x=[0,1,0]$.  Next we define lines $L_i = \overline{xw_{i,0}}$, $L'_i = \overline{yw_{0,i}}$, $L''_i = \overline{w_{0,0}z_i}$, and the rest of the points $w_{i,j}$ as in Theorem~\ref{MinimalMagicGroup}.  Using these lines we again define $v_1$ and $v_2$ as in (\ref{V1V2}).  This leaves us with the same equalities described in (\ref{equalities}).  We define the line $J=\overline{y_1 w_{2,1}}$ and then set
$$
v_3=3v_{L_0}+v_{L''_2} + 2v_{L''_3} +v_J.
$$
One can then directly check that $v=(v_1, v_2, v_3)$ is magic.
\end{proof}

To this point we have a relatively small group for which a projective plane is magic, however in the case when $n$ is a prime we can say more.  It turns out that in some sense $(\Zb/n\Zb)^3$ is the only group for which $\Pi$ is magic.

\begin{theorem}
If $n$, the order of $\Pi$, is prime and $\Pi$ is magic over some Abelian group $G$,  then $(\Zb/n\Zb)^3$ is a subgroup of $G$.
\end{theorem}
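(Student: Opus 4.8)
The plan is to reuse the counting argument from the proof of Proposition~\ref{cyclic_magic}, which is already purely group-theoretic, and then exploit the primality of $n$ to upgrade a pigeonhole estimate into an honest elementary abelian subgroup. Let $v:\PP\to G$ be a magic function with magic constant $g\in G$. First I would show that $nv(x)$ is the same element of $G$ for every $x\in\PP$: given $a,b\in\PP$ with line $L=\overline{ab}$, the $n$ lines of $\LL^a\setminus\{L\}$ partition the complement $L^c=\PP\setminus L$ into $n$ blocks of size $n$ (distinct lines through $a$ meet only at $a$, and $|L^c|=n^2$), so $v(L^c)=\sum_{L'\in\LL^a\setminus\{L\}}\bigl(v(L')-v(a)\bigr)=ng-nv(a)$; the same computation with $b$ in place of $a$ gives $v(L^c)=ng-nv(b)$, hence $nv(a)=nv(b)$. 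Fixing a basepoint $a_0$, it follows that $n\bigl(v(a)-v(a_0)\bigr)=0$ for every $a$, i.e.\ each difference $v(a)-v(a_0)$ lies in the $n$-torsion subgroup $G[n]=\{\,h\in G:nh=0\,\}$.

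Now I would invoke the hypothesis that $n$ is prime: then $G[n]$ is a module over $\Zb/n\Zb=\Fb_n$, that is, an $\Fb_n$-vector space. The map $a\mapsto v(a)-v(a_0)$ is injective because $v$ is, and its image lies in $G[n]$, so $|G[n]|\ge|\PP|=n^2+n+1>n^2$. A $d$-dimensional $\Fb_n$-vector space has exactly $n^d$ elements, so $d\le 2$ would give at most $n^2$ elements; hence $\dim_{\Fb_n}G[n]\ge 3$. Any three-dimensional $\Fb_n$-subspace of $G[n]$ is, as an abelian group, isomorphic to $(\Zb/n\Zb)^3$, and it is a subgroup of $G[n]\le G$. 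This proves the theorem.

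Two points deserve care, though neither is a real obstacle. The first is verifying that the partition step goes through verbatim over an arbitrary abelian group: the argument in Proposition~\ref{cyclic_magic} used only incidence combinatorics and commutativity of addition, so it transfers unchanged, but one should state explicitly that $\LL^a\setminus\{L\}$ is a \emph{partition} of $L^c$, not merely a cover. The second, and the genuine reason the hypothesis ``$n$ prime'' is needed, is that only for prime $n$ is $G[n]$ guaranteed to be a vector space; for composite $n$ an abelian group of exponent $n$ and large rank need not contain a copy of $\Zb/n\Zb$ (e.g.\ $(\Zb/2\Zb)^k$ when $n=4$), so the pigeonhole step would not yield $(\Zb/n\Zb)^3$. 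It is worth stressing that, in contrast to what one might expect, no computation of the $\Fb_n$-rank of the incidence matrix is required: injectivity of $v$ together with the identity ``$nv$ is constant'' already forces the $n$-torsion of $G$ to be large.
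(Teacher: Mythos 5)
Your proof is correct, and it takes a genuinely different route from the paper's. The paper reduces to the subgroup generated by $\mathrm{Im}\,v$, invokes the structure theorem to write $G=\bigoplus_{i=1}^k\Zb/n_i\Zb$, applies Proposition~\ref{cyclic_magic} coordinatewise to conclude $n\mid n_i$ and $|\mathrm{Im}\,v_i|\le n$ for each non-constant coordinate, and then counts: $k\le 2$ would force $|\mathrm{Im}\,v|\le n^2<|\PP|$. You instead rerun the double-counting identity $v(L^c)=ng-nv(a)$ directly in $G$ (correctly noting that the lines of $\LL^a\setminus\{L\}$, with $a$ removed, partition $L^c$), conclude that all differences $v(a)-v(a_0)$ lie in the $n$-torsion subgroup $G[n]$, and use primality to view $G[n]$ as an $\Fb_n$-vector space of cardinality at least $n^2+n+1>n^2$, hence of dimension at least $3$. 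Your version avoids both the reduction to finitely generated groups and the structure theorem, and it pinpoints exactly where primality enters (namely, that $G[n]$ is a vector space and therefore a large $n$-torsion group really does contain $(\Zb/n\Zb)^3$ --- which, as you note, fails for composite exponent). The paper's version buys economy within the article, since Proposition~\ref{cyclic_magic} has already done the combinatorial work and only the assembly remains; the underlying identity $nv(a)=nv(b)$ is the same in both arguments.
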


\begin{proof}
Let $v:\PP \to G$ be a magic function.  We may assume $G$ is finitely generated since we may work with the subgroup generated by $\text{Im }v$.  Since $G$ is finitely generated there exist $n_1, \dots, n_k \in \Nb$ such that $G=\bigoplus_{i=1}^k \Zb/n_i\Zb$.  Thus $v=\bigoplus_{i=1}^k v_i$, where $v_i:\PP \to \Zb/n_i\Zb$ is the natural projection of $v$ into $\Zb/n_i\Zb$, and each $v_i$ is line invariant.  Without loss of generality we assume each $v_i$ is non-constant and hence pseudomagic.  Since $n$ is prime,  Proposition~\ref{cyclic_magic} implies that $n \mid n_i$, and furthermore, the proof shows $|\text{Im }v_i|\leq (n,n_i)=n$.  This means $k\geq 3$ as otherwise $|\text{Im } v|\leq n^2 <|\PP|$ which contradicts $v$ being injective.
\end{proof}

When $n$ is not prime it might be possible for smaller groups to admit a magic function.  If $m \mid n$ and $m>1$ then $mn^2>n^2+n+1 = |\PP|$, so $\Pi$ could potentially be magic over a group of order $mn^2$.

\textsc{Open Questions.}

1. {\em When $m \mid n$ with $m>1$, is $\Pi$ magic over $(\Zb/n\Zb)^2 \times \Zb/m\Zb$?}

As it turns out, for planes of the form $\Pi_q$ when $q$ is prime, every line invariant function to $\Zb/m\Zb$ is a linear combination of the $v_L$ functions.  However, this is not true when $q$ is not prime (see \cite{NN}).  In the non-prime case there are special line invariant functions which are not linear combinations of the $v_L$'s.  Our proof of Theorem~\ref{MinimalMagicGroup} does not make use of these special functions.  Therefore, it seems reasonable to expect that if our open question is answered in the affirmative, then the proof will require the special functions.

Another question that one could ask is whether or not all magic functions to larger groups $G$ actually come from functions to $(\Zb/n\Zb)^3$.  A more technical way to describe this is as follows:

2. {\em Suppose that $v: \Pi \to G$ is a magic function and $\Pi$ is order $n$, under what conditions is there a surjection $f:G \to (\Zb/n\Zb)^3$ such that $f \circ v: \Pi \to (\Zb/n\Zb)^3$ is still magic?}

\paragraph*{Acknowledgment}
The second author would like to thank Michael Raney for originally introducing him to magic configurations.

\end{document}